\newtheorem{theorem}{Theorem}[section]
\newtheorem{remark}{Remark}[section]
\newtheorem{example}{Example}[section]
\newcommand{\R}{\mathbb{R}}
\numberwithin{equation}{section}
\begin{document}

\title{\textbf{On Solving Some Classes of Second Order ODEs}}

\author{R. AlAhmad\thanks{rami\_thenat@yu.edu.jo}}
\author{M. Al-Jararha\thanks{mohammad.ja@yu.edu.jo}}
\affil{Department of Mathematics, Yarmouk University, Irbid, Jordan, 21163.}
 
\date{}
\maketitle

\begin{abstract}
In this paper, we  introduce some analytical techniques to solve some classes of  second order differential equations. Such classes of differential equations arise in describing some mathematical problems in Physics and Engineering. 
\end{abstract}
\vspace{0.5 cm}
\noindent{\bf AMS Subject Classification}: 34A25, 34A30.\\
\noindent{\bf Key Words and Phrases}: Chebyshev's Differential Equation, Hypergeometric Differential Equation, Cauchy-Euler's Differential Equation, Exact Second Order Differential Equations, Nonlinear Second Order Differential Equations.\\
\section{Introduction}
One of the most important applications in the calculus of variation is to maximize  (minimize) the functional

\begin{equation}\label{integralfunctional}
Q[y]=\int_{a}^{b} \left( \sqrt{p(x)} (y^\prime(x))^2 + \frac{ h(y(x))}{ \sqrt{p(x)}}\right) dx,
\end{equation}
 
where $p(x)$ is a positive and differentiable function on some open interval $(a,b)\subset \R,$ and $h(x)$ is a differentiable function. In fact, the functional 
\begin{equation}\label{integralfunctional2}
Q[y]=\int_{a}^{b} \left( \sqrt{p(x)} (y^\prime(x))^2 + \frac{ h(y(x))}{ \sqrt{p(x)}}\right) dx
\end{equation}
attains its extreme values at a function  $y(x)\in C^2(a,b)$ that  $y(x)$  satisfies the  Euler's-Lagrange differential equation \cite{Arfken, Hand, Makarets},
\begin{equation}\label{leequation}
\frac{\partial F}{\partial y}(x,y,y^\prime) - \frac{d}{dx}\left(\frac{\partial F}{\partial y^\prime}(x,y,y^\prime)\right)= 0,
\end{equation} 
where 
\begin{equation}\label{functionform}
F(x,y,y^\prime):=\sqrt{p(x)} (y^\prime(x))^2 + \frac{h(y(x))}{ \sqrt{p(x)}}.
\end{equation}
Therefore, the problem of maximizing (minimizing)  $Q[y]$ is reduced to solve the differential equation \eqref{leequation}. i.e., to solve 
\begin{equation}\label{primerydiffequation}
p(x)y^{\prime\prime}(x)+\frac{1}{2}p^\prime(x)y^\prime(x)=\frac{1}{2}h^\prime(y(x)).
\end{equation}
Hence, it is a matter to solve such differential equations. In the first part of this paper, we solve the following class of second order differential equation:
\begin{equation}\label{main}
p(x)y^{\prime\prime}(x)+\frac{1}{2}p^\prime(x)y^\prime(x)+f(\sqrt{p(x)}\;y^\prime(x),y(x))=0
\end{equation}
which generalizes the differential equation \eqref{primerydiffequation}. Here, we assume that $p(x)$ is a positive and differentiable function on some open interval $ (a,b)\subset \R$, and  $f(\sqrt{p(x)}\;y^\prime(x),y(x))$ is continuous function on some domain $D\subset \R^2$. In fact, equation \eqref{main} not only generalizes  \eqref{primerydiffequation}, but also it generalizes many of well known differential equation. For example,

\begin{enumerate}
\item the Chebyshev's Differential Equation ~\cite{Arfken, Zw},
 \[
 (1 -x^2)y^{\prime\prime}(x)-x\;y^\prime(x)+n^2 y(x)=0,\;\; |x|<1,
 \]
\item the Cauchy-Euler's Differential Equation ~\cite{Boyce, Kr, MURPHY}, 
\[
ax^2y^{\prime\prime}(x)+a\;x\;y^\prime(x)
+b\;y(x)=0,\;\; x>0,
\]
\item the Nonlinear Chebyshev's Equation,
\[
 (1 -x^2)y^{\prime\prime}(x)+\left(\alpha\sqrt{1-x^2}-x\right)y^\prime(x) +f(y(x)) = 0,\;\; \;\;|x|<1,
\]

\item the Hypergeometric Differential Equation  ~\cite{Anderws,Zw}, 
\[
x(1-x)y^{\prime\prime}(x) + \left[c-(a+b+1)x \right] y^\prime(x) - a\,b\;y(x) = 0, \;0<x<1, \;\text{ with } c=1/2, a=-b,
\]
and
\item  the Nonlinear Hypergeometric Differential Equation, 
\begin{equation*}
x(1-x)y^{\prime\prime}(x)+ \left(\frac{1}{2}-x+\alpha\sqrt{x(1-x)}\right)y^\prime(x)+f(y(x))=0,\; 0<x<1.
\end{equation*}
\end{enumerate}
Throughout this paper, we call the class of differential equation in \eqref{main} by Chebyshev's-type of differential equations.

In the second part of this paper, we introduce an approach to solve 
the differential equation
\begin{equation}\label{part2eq}
a_2\left(f^\prime(y)y^{\prime\prime}+(y^\prime)^2f^{\prime\prime}(y)\right)+a_1f^\prime(y)y^\prime+a_0f(y)=g(x),
\end{equation}
where $a_0,\;a_1$ and $a_2$ are constants, and $f\in C^2(a,b)$, for some  open interval $(a,b) \subset \R$. We also give an approach to solve the differential equation 
\begin{equation}\label{part2eeq}
p(x)\left(f^\prime(y)y^{\prime\prime}+(y^\prime)^2f^{\prime\prime}(y)\right)+\frac{1}{2}p^\prime(x)f^\prime(y)y^\prime+a_0f(y)=0,
\end{equation}
where $p(x)$ is a positive and differentiable function on some open interval $(a,b) \subset \R$, and  $f\in C^2(c,d)$, for some  open interval $(c,d) \subset \R$.
Throughout this paper, we call these classes of second order differential equations by $f-$type second order differential equations.

In the third part of this paper, we introduce an approach to solve 
the second order nonlinear differential equation
\begin{equation}
a_2(x,y,y^\prime)\left(f^\prime(y)y^{\prime\prime}+f^{\prime\prime}(y)(y^\prime)^2\right)+a_1(x,y,y^\prime)(f^\prime(y)y^\prime)+a_0(x,y,y^\prime)=0,
\end{equation}
where  $f(y)$ is an invertible function ($y=f^{-1}(z)$), and  $f\in C^2(a,b)$ where $(a,b)$ is the open interval in $\R.$ To solve this class of differential equations, we assume that
\begin{equation}\label{exact}
a_2\left(x,f^{-1}(z),\frac{z^\prime}{f^\prime\left(f^{-1}(z)\right)}\right)z^{\prime\prime}+a_1\left(x,f^{-1}(z),\frac{z^\prime}{f^\prime\left(f^{-1}(z)\right)}\right)z^\prime+a_0\left(x,f^{-1}(z),\frac{z^\prime}{f^\prime\left(f^{-1}(z)\right)}\right)=0
\end{equation}
is exact differential equation. The differential equation  \eqref{exact} is called exact  if the conditions 
\begin{equation}\label{exactcond2}
\frac{\partial a_2}{\partial z}=\frac{\partial a_1}{\partial z^\prime}, \;\; \frac{\partial a_2}{\partial x}=\frac{\partial a_0}{\partial z^\prime},\; \text{and}\; \;\frac{\partial a_1}{\partial x}=\frac{\partial a_0}{\partial z}.
\end{equation}
hold \cite{AlAhmad,Aljararha}.
In this case, the first integral of \eqref{exact}  exists and it is given by 
\[
\int_{x_0}^{x}a_0(\alpha,z,z^\prime)d\alpha+\int_{z_0}^{z}a_1(x_0,\beta,z^\prime)d\beta+\int_{z^\prime_0}^{z^\prime}a_2(x_0,z_0,\gamma)d\gamma=c.
\]
Throughout this paper, we call this class of differential equations by $f-$type second order differential equations that can be transformed into exact second order differential equations. 

The layout of the paper: In the first section, we solve Chebyshev's-type of Second Order Differential Equations. In the second section, we solve the $f-$type of second order differential equations. In the third section, we solve  $f-$type second order differential equation that can be transformed into exact second order differential equations. The fourth section is devoted for the concluding remarks. 
\section{Solving Chebyshev's-type of Second Order Differential Equations}
In this section, we present an approach to solve Chebyshev's-type of second order differential equations
\begin{equation}\label{mainsec1}
p(x)y^{\prime\prime}(x)+\frac{1}{2}p^\prime(x)y^\prime(x)+f(\sqrt{p(x)}\;y^\prime(x),y(x))=0,
\end{equation}
where $p(x)$ is a positive and differentiable function on some open interval $\mathbb \mathbb (a,b) \in \R$, and  $f(\sqrt{p(x)}\;\\y^\prime(x),y(x))$ is continuous function on some domain $D\subset \R^2$.  The approach is described in the following theorem:
\begin{theorem}\label{maintheorem}
Assume that $p(x)$ be a positive and differentiable function on the open interval $\mathbb (a,b)\subset \R$. Let $x_0$ be any point in the interval $(a,b)$. Then 
\[
  t=\int_{x_0}^x\frac{d\xi}{\sqrt{p(\xi)}}
\] 
 transforms the differential equation ~\eqref{mainsec1} into the second order differential equation
\begin{equation}\label{main22}
 y^{\prime\prime}(t)+f(y(t),y^\prime(t))=0.
\end{equation} 
\end{theorem}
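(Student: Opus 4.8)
The plan is to perform the change of independent variable $t=\int_{x_0}^x \frac{d\xi}{\sqrt{p(\xi)}}$ directly and track how the derivatives transform. First I would note that, since $p(x)>0$ and is differentiable on $(a,b)$, the function $x\mapsto t(x)$ is $C^1$ with $\frac{dt}{dx}=\frac{1}{\sqrt{p(x)}}>0$, hence strictly increasing and invertible; this legitimizes treating $y$ as a function of $t$ and guarantees the transformation is genuine (no loss of solutions). I will write $\dot{y}=\frac{dy}{dt}$ for the derivative in the new variable to keep the bookkeeping clean, and reserve the prime for $\frac{d}{dx}$ only within the computation.

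Next I would compute the chain-rule relations. From $\frac{dt}{dx}=p(x)^{-1/2}$ we get $y'(x)=\dot{y}\,p(x)^{-1/2}$, so that $\sqrt{p(x)}\,y'(x)=\dot{y}(t)$; this is the key simplification that makes the argument of $f$ collapse to $(\dot y, y)$. Differentiating $y'(x)=\dot y\,p(x)^{-1/2}$ once more in $x$, and using $\frac{d}{dx}\bigl(p(x)^{-1/2}\bigr)=-\tfrac12 p'(x)p(x)^{-3/2}$ together with $\frac{d}{dx}\dot y = \ddot y\,p(x)^{-1/2}$, yields
\[
y''(x)=\ddot y(t)\,\frac{1}{p(x)}-\frac{1}{2}\,\dot y(t)\,\frac{p'(x)}{p(x)^{3/2}}.
\]
Then I would substitute this expression for $y''$, together with $y'(x)=\dot y\,p(x)^{-1/2}$ and $\sqrt{p(x)}\,y'(x)=\dot y$, into \eqref{mainsec1}. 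Multiplying $p(x)\,y''(x)$ out gives $\ddot y - \tfrac12 \dot y\,p'(x)p(x)^{-1/2}$, and the term $\tfrac12 p'(x)y'(x)=\tfrac12 \dot y\,p'(x)p(x)^{-1/2}$ cancels it exactly, leaving $\ddot y(t)+f(\dot y(t),y(t))=0$, which is \eqref{main22} (noting the paper's convention for the order of arguments of $f$).

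The computation itself is entirely routine; the only point that deserves care — and what I would flag as the main obstacle, such as it is — is the regularity/invertibility bookkeeping. One must confirm that a $C^2$ solution $y$ in $x$ corresponds to a $C^2$ solution in $t$ and conversely: this requires that $t(x)$ be a $C^2$ diffeomorphism, which in turn needs $p$ to be, strictly speaking, a bit more than merely differentiable for the second derivative $\ddot y$ to exist classically. I would either observe that the stated hypotheses are used in the formal sense standard for this type of result, or tacitly assume $p\in C^1$ (so $t(x)\in C^2$); since the paper's framing is computational, a brief remark that $\sqrt{p}$ is nonvanishing and $t(x)$ is strictly monotincreasing, hence invertible, suffices to make the substitution rigorous. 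With that in place, the identity above completes the proof.
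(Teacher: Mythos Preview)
Your proposal is correct and follows essentially the same route as the paper: a direct chain-rule computation under the substitution $t=\int_{x_0}^x p(\xi)^{-1/2}\,d\xi$, observing that $\sqrt{p(x)}\,y'(x)$ becomes $\dot y$ and that $p(x)y''+\tfrac12 p'(x)y'$ becomes $\ddot y$, whence \eqref{mainsec1} collapses to \eqref{main22}. The only difference is cosmetic---the paper expresses the $t$-derivatives in terms of $x$-derivatives rather than the reverse---and your added remarks on monotonicity/invertibility and the regularity needed for $t(x)$ to be $C^2$ are welcome refinements the paper omits.
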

\begin{proof} Let $$t=\int_{x_0}^x\frac{d\xi}{\sqrt{p(\xi)}}.$$ Since,  
\[
\frac{dy}{dx}=\frac{dy}{dt}\frac{dt}{dx}.
\]
Hence,
\begin{equation}\label{main4}
\frac{dy}{dt}= \sqrt{p(x)}\;\frac{dy}{dx}.
\end{equation}
Therefore,
\begin{equation}\label{main5}
\frac{d^2y}{dt^2}
=\frac{d}{dt}\left(\sqrt{p(x)}\;\frac{dy}{dx}\right)=\frac{d}{dx}\left(\sqrt{p(x)}\;\frac{dy}{dx}\right)\frac{dx}{dt}=p(x)y^{\prime\prime}+\frac{1}{2} p^\prime(x)y^\prime.
\end{equation}
By substituting  ~\eqref{main4} and  ~\eqref{main5} in Equation ~\eqref{main}, we get the result.\quad
\end{proof}
\begin{remark}\label{R2}
The differential equation ~\eqref{main22} is independent of the variable $t$, and so, it is easy to solve by setting $\eta(t)=y^\prime(t)$. Hence, it reduces into the following first order differential equation: 
\begin{equation}\label{main3}
\eta\frac{d\eta}{dy}+f(y,\eta)=0.\
\end{equation}
In case that $f(\sqrt {p(x)}\;y^\prime,y)=f(y)$, we get 
\[
\eta^2(t)=-2\int^y f(\xi)d\xi+c.
\]
Hence,
\[
y^\prime(t)=\left(c-2\int ^y f(\xi)d\xi\right)^{\frac{1}{2}}
\]
where $c$ is the integration constant.
\end{remark}
Next, we present some examples to explain this approach.
\begin{example}
Consider the nonlinear Chebyshev's differential equation
\begin{equation}\label{hh}
\left\{
\begin{array}{ll}
(1 -x^2)y^{\prime\prime}(x)-x y^\prime(x) +4\sqrt{1-x^2}\;y^{\prime}y(x) = 0,\\
\\
y(0)=\frac{1}{2},\;y^\prime(0)=-\frac{1}{2}\,.
\end{array}
\right.
\end{equation}
Then
\[
t=\int^x\frac{d\xi}{\sqrt{1-\xi^2}}=\arcsin(x)
\]
transforms \eqref{hh} into 
\[
y^{\prime\prime}(t)+4y^\prime(t) y(t)=0.
\]
Set $\eta(t)=y^\prime(t)$. The above equation becomes  
\[
\eta \frac{d \eta}{dy}+4\eta y=0. 
\]
The solution of this equation is $y(t)=\displaystyle\frac{1}{2(t+1)}.$ Therefore, $y(x)=\displaystyle\frac{1}{2(\arcsin(x)+1)}.$
\end{example}

\begin{example} Consider the initial value problem 
\begin{equation}\label{ex2eq}
\left\{
\begin{array}{ll}
x^2y^{\prime\prime}+xy^\prime- 3y^2=0, \;x>0, \\
\;y(1) = 2, \;y^\prime(1) = 4.
\end{array}
\right.
\end{equation} 
Then  
$$t=\int_{1}^x \frac{d\xi}{\xi}d\xi=\ln(x)$$ transforms the  \eqref{ex2eq} into 
\begin{equation}\label{transone}
\left\{
 \begin{array}{ll}
 y^{\prime\prime}-3y^2=0,\\
 y(0) = 2,\; y^\prime(0) = 4.
 \end{array}
\right.
\end{equation} 
The solution of the above differential equation is
\[
y (t)= \frac{2}{(1 - t)^2}.
\]  
Hence, 
\[
y(x) = \frac{2}{(1 - \ln(x))^2}\,.
\]
\end{example}
\begin{example}\label{ex3} 
Consider the  linear form of ~\eqref{main} 
\begin{equation}\label{ex1eq}
\phi(x)y^{\prime\prime}+\frac{1}{2}\phi^\prime(x)y^\prime+\lambda^2 y=0, 
\end{equation} 
where $\lambda\in \R,$ and $\phi(x)$ is a positive and differentiable function on some open interval $\mathbb (a,b) \subset \R$. By applying the transformation 
\[
  t=\int_{x_0}^x \frac{d\xi}{\sqrt{\phi(\xi)}}d\xi,
\]
equation \eqref{ex1eq} can be transformed into the following second order differential equation:
$$\frac{d^2y}{dt^2}+\lambda^2 y=0.$$ 
The solution of this different5ial equation is 
$$y(t)=C_1\sin(\lambda t)+C_2\cos(\lambda t).$$ 
Hence, the general solution of equation ~\eqref{ex1eq} is 
$$y(x)=C_1\sin\left(\lambda\int_{x_0}^x \frac{d\xi}{\sqrt{\phi(\xi)}}d\xi \right)+C_2\cos\left(\lambda\int_{x_0}^x \frac{d\xi}{\sqrt{\phi(\xi)}}d\xi \right).$$ 
\end{example}
\begin{example}
Consider the following second order  linear differential equation (see Eq. 239, p. 335 in \cite{MURPHY}):
\begin{equation}\label{part1ex4}
4xy^{\prime\prime}+2y^\prime+y=0.
\end{equation}
From the previous example, the general solution of this equation is given by
$$y(x)=C_1\sin\left(\int_{x_0}^x \frac{d\xi}{2\,\sqrt{\xi}}\,d\xi \right)+C_2\cos\left(\int_{x_0}^x \frac{d\xi}{2\,\sqrt{\xi}}\,d\xi \right)$$
and so, 
\[
y(x)=C_1\sin\left(\sqrt{x}\right)+C_2\cos\left(\sqrt{x}\right).
\]
\end{example}
\begin{remark} \label{R3}
Consider the  second order linear differential equation
\begin{equation}
(\phi(x))^2y^{\prime\prime}(x)+\phi(x)\phi^\prime(x)y^\prime(x)+\lambda y(x)=0, \;\;\; x \in (a,b),
\end{equation}
and assume that $\phi(x)$ is a positive and differentiable function on an open interval $(a,b)\subset \R.$ Moreover, assume that $\phi(a)=\phi(b)=0$. Define the linear differential operator 
\[
L[y]:=-\left( (\phi(x))^2y^{\prime\prime}(x)+\phi(x)\phi^\prime(x)y^\prime(x))\right)=\lambda y(x).
\]
Then the boundary value problem 
\begin{equation}\label{bvpp}
\left\{ \begin{array}{ll}
L[y]=-\left((\phi(x))^2y^{\prime\prime}(x)+\phi(x)\phi^\prime(x)y^\prime(x)\right)=\lambda y(x),& a<x<b,\\
\phi(a)=\phi(b)=0,
\end{array}
\right.
\end{equation}
satisfies the Lagrange Identity $\int_a^b \phi L[\psi]dx=\int_a^b \psi L[\phi]dx$, where $\phi$ and $\psi$ satisfy the above boundary value problem. Therefore, the operator $L[y]$ is self-adjoint. Hence, the boundary value problem \eqref{bvpp} has an orthogonal set of eigenfunctions $\{\phi_n(x)\}_{n=1}^\infty$ with corresponding eigenvalues $\{\lambda_n\}_{n=1}^\infty$. Since the above boundary value problem is a special case of \eqref{main}. Then, by using the approach described in Theorem \ref{maintheorem}, it is easy to find its  orthogonal set of eigenfunctions.   
\end{remark}

By using the same approach described in Theorem \ref{maintheorem}. We can solve the following class of second  order linear differential equations:
\begin{equation}\label{mainlin1}
\left[P(x)\right]^2y^{\prime\prime}(x)+P(x)\left[\alpha+ P^\prime(x) \right]y^\prime(x)+\beta y(x)=0, 
\end{equation} 
where $P(x)>0$, $P(x)\in C^1(a,b)$,  and $\alpha$ and $\beta$ are constants.
In fact, the transformation 
\begin{equation} \label{transformation}
t=\displaystyle\int_{x_0}^x\frac{d\xi}{P(\xi)},
\end{equation}
where $x_0,\; x \in (a,b),$   transforms Eq. \eqref{mainlin1} into the following second order differential equation:
\[
y^{\prime\prime}(t)+\alpha y^\prime(t)+\beta y(t)=0. 
\]
This differential equation is with constant coefficients which can be solved by using the elementary techniques of solving second order differential equations. For illustration, we present the following examples:

\begin{example}
Consider the well-known Cauchy-Euler's Equation
\[
x^2y^{\prime\prime}(x)+(\alpha +1) xy^\prime(x)+\beta y=0,\;x>0.
\]
Then $P(x)=x$, and the $t-$transformation is $t=\ln(x)$, which transforms the equation into
\[
y^{\prime\prime}(x)+\alpha y^\prime(x)+\beta y=0
\]
\end{example}

\begin{example}
Consider the Chebyshev's Equation 
\[
\left[1-x^2\right]y^{\prime\prime}(x)-2xy^\prime(x)+n^2 y=0,\;|x|<1.
\]
Then $P(x)=\sqrt{1-x^2}$, and the $t-$transformation is $t=\sin^{-1}(x)$, which transforms the equation into
\[
y^{\prime\prime}(x)+n^2 y=0.
\]
Using this transformation, the solution of Chebyshev's Equation is given by
\[
y(x)=A\cos(n\sin^{-1}(x))+B\sin(n\sin^{-1}(x)).
\]
\end{example}

\begin{example}
Consider the  Hypergeometric Equation 
\begin{equation}\label{hypereq}
x(1-x)y^{\prime\prime}(x)+\frac{1}{2}(1-2x)y^\prime(x)+a^2 y=0,\;x\in(0,1).
\end{equation}
Then $P(x)=\sqrt{x(1-x)}$, and the $t-$transformation is $t=\sin^{-1}(2x-1).$ This  transforms the  equation into
\[
y^{\prime\prime}(t)+a^2 y(t)=0.
\]
Hence, the solution of  \eqref{hypereq} is given by
\[
y(x)=A\cos(a\sin^{-1}(2x-1))+B\sin(a\sin^{-1}(2x-1)).
\]
\end{example}
For certain functions, $h(x)\in C(a,b)$, for some open interval $(a,b)\in \R$ , we can solve the nonhomogeneous second order differential equation 
\begin{equation}\label{main33}
\left[P(x)\right]^2y^{\prime\prime}(x)+P(x)\left[\alpha+ P^\prime(x) \right]y^\prime(x)+\beta y=h(x).  
\end{equation}
Particularly, when $h(x)$ can be written in the form $H(t),$ where $t=\int_{x_0}^x\frac{d\xi}{P(\xi)}.$ The following example shows this idea:
\begin{example}
Consider the  nonhomogeneous differential equation  
\begin{equation}\label{linexample}
x(1-x)y^{\prime\prime}(x)+\frac{1}{2}(1-2x)y^\prime(x)+a^2 y=2x,\;x\in(0,1).
\end{equation}
Then $P(x)=\sqrt{x(1-x)}$. The $t-$transformation is $t=\sin^{-1}(2x-1)$. This transforms the equation into
\[
y^{\prime\prime}(t)+a^2 y(t)=1+\sin(t).
\]
Hence, the solution of equation \eqref{linexample} is given by
\begin{equation*}
y(x)=\left\{
\begin{array}{ll}
A\cos(a\sin^{-1}(2x-1))+B\sin(a\sin^{-1}(2x-1))+\displaystyle \frac{2x-1}{a^2-1}+\displaystyle\frac{1}{a^2},\;\text{if}\;\;a\neq \pm 1,\\
A\cos(\sin^{-1}(2x-1))+B(2x-1)+\displaystyle \frac{1}{2}(1-2x)\sin^{-1}(2x-1)+1,\;\text{if}\;\;a= \pm 1.
\end{array}
\right.
\end{equation*}
\end{example}

\section{Solving $f-$type Second Order Differential Equations}
In this section, we  solve the following class of second order  nonlinear differential equation
\begin{equation}\label{part2eq1}
a_2\left(f^\prime(y)y^{\prime\prime}+(y^\prime)^2f^{\prime\prime}(y)\right)+a_1f^\prime(y)y^\prime+a_0f(y)=g(x),
\end{equation}
where $a_2,a_1$ and $a_0$ are constants, and  $f\in C^2(a,b)$, for some  open interval $(a,b) \subset \R$. In this section,  we also solve the following class of second order  nonlinear differential equation:
\begin{equation}\label{part2eq2}
p(x)\left(f^\prime(y)y^{\prime\prime}+(y^\prime)^2f^{\prime\prime}(y)\right)+\frac{1}{2}p^\prime(x)f^\prime(y)y^\prime+a_0f(y)=0,
\end{equation}
where $p(x)$ is a positive and differentiable function on an open interval $(a,b) \subset \R$, and  $f\in C^2(c,d)$, for some open interval $(c,d)\subset \R$. To solve \eqref{part2eq1},  let $z=f(y)$. Hence, $z^\prime=f^\prime(y)y^\prime$, and $z^{\prime\prime}=f^\prime(y)y^{\prime\prime}+(y^\prime)^2f^{\prime\prime}(y).$  Substitute $z$, $z^\prime$ and $z^{\prime\prime}$ in equation \eqref{part2eq1}, we get
\begin{equation}\label{part2eqcc}
a_2z^{\prime\prime}+a_1z^\prime+a_0z=g(x), 
\end{equation}
Similarly,  equation \eqref{part2eq2}  becomes 
\begin{equation}\label{part2eq2l}
p(x)z^{\prime\prime}+\frac{1}{2}p^\prime(x)z^\prime+a_0z=0,
\end{equation}
which is the linear form of \eqref{main}. Therefore, it can be solved by using the technique described in Example \ref{ex3}.  To illustrate the procedure of solving \eqref{part2eq1} and \eqref{part2eq2}, we present the following examples:
\begin{example}
Consider Langumir Equation, with a slightly modification, 
\begin{equation}\label{part2ex1}
3yy^{\prime\prime}+3(y^\prime)^2+4yy^\prime+y^2=1.
\end{equation}
The original Langumir Equation is given by
\begin{equation*}
3yy^{\prime\prime}+(y^\prime)^2+4yy^\prime+y^2=1
\end{equation*}
which originally appears in connection with the theory of current flow from hot cathode to an anode in a hight vacuum \cite{Ames,Langmuir}. To solve \eqref{part2ex1}, we let $z=\displaystyle\frac{y^2}{2}$. Then $z^\prime=yy^\prime$ and $z^{\prime\prime}=yy^{\prime\prime}+(y^\prime)^2.$ Hence, equation \eqref{part2ex1} becomes
\[
3z^{\prime\prime}+4z^\prime+2z=1.
\]
The solution of this equation is $$z(x)=e^{-\frac{2}{3}x}\left(A\cos\left(\frac{\sqrt 2}{3}x\right)+B\sin\left(\frac{\sqrt 2}{3}x\right)\right)+\frac{1}{2}.$$ Hence, the solution of \eqref{part2ex1} is given by
\[
y^2=2e^{-\frac{2}{3}x}\left(A\cos\left(\frac{\sqrt 2}{3}x\right)+B\sin\left(\frac{\sqrt 2}{3}x\right)\right)+1.
\]
\end{example}
\begin{example} 
Consider the initial value problem
\[
\left\{
\begin{array}{ll}
y^{\prime\prime}+(y^\prime)^2+1=(\cos \omega x)e^{-y},\; \;\omega\neq \pm 1,\\
y(0)=y^\prime(0)=0.
\end{array}
\right.
\]
This problem is equivalent to 
\[
\left\{
\begin{array}{ll}
\left(y^{\prime\prime}+(y^\prime)^2\right)e^y+e^y=(\cos \omega x),\; \;\omega\neq \pm 1,\\
y(0)=y^\prime(0)=0.
\end{array}
\right.
\]
Let $z=e^y.$ Then $z^\prime=y^\prime e^y$ and $z^{\prime\prime}=y^{\prime\prime}e^y+(y^\prime)^2e^y$. By substituting $z$, $z^\prime$ and $z^{\prime\prime}$ in the above initial value problem, we get
\[
\left\{
\begin{array}{ll}
z^{\prime\prime}+z=\cos \omega x,\;\; \omega \neq \pm 1,\\
z(0)=1,\;z^\prime(0)=0.
\end{array}
\right.
\]
The solution of this problem is $z(x)=\frac{1}{1-\omega^2}\left( \cos \omega x-\omega^2\cos x\right),\;\omega\neq \pm1.$ Therefore,
$y(x)=\ln \left(\frac{1}{1-\omega^2}\left(\cos \omega x-\omega^2\cos x \right)\right),\; \omega \neq \pm 1.$
\end{example}

\begin{example} Let $\phi(x)$ be a positive and differentiable function on an open interval $(a,b) \subset \R$, and consider the differential equation
\[
\phi(x)\left(y^{\prime\prime}+(y^\prime)^2\right)+\frac{1}{2}\phi^\prime(x) y^\prime +\lambda =0.
\]
By multiplying this equation by $e^y$, we get
\[
\phi(x)\left(y^{\prime\prime}+(y^\prime)^2\right)e^y+\frac{1}{2}\phi^\prime(x)y^\prime e^y+\lambda e^y=0.
\]
Let $z=e^y.$ Then $z^\prime=y^\prime e^y$ and $z^{\prime\prime}=y^{\prime\prime}e^y+(y^\prime)^2e^y$. By substituting $z$, $z^\prime$ and $z^{\prime\prime}$ in the above differential equation, we get 
\[
\phi(x)z^{\prime\prime}+\frac{1}{2}\phi^\prime(x)z^\prime+\lambda z=0.
\]
The solution of this equation is (see Example \ref{ex3}) 
\[
z(x)=C_1\sin\left(\lambda\int_{x_0}^x \frac{d\xi}{\sqrt{\phi(\xi)}}d\xi \right)+C_2\cos\left(\lambda\int_{x_0}^x \frac{d\xi}{\sqrt{\phi(\xi)}}d\xi \right).
\]
Therefore,
\[
y(x)=\ln\left[C_1\sin\left(\lambda\int_{x_0}^x \frac{d\xi}{\sqrt{\phi(\xi)}}d\xi \right)+C_2\cos\left(\lambda\int_{x_0}^x \frac{d\xi}{\sqrt{\phi(\xi)}}d\xi \right)\right].
\]
\end{example}
\section{Solving $f$-type Second Order Differential Equations that can be  Transformed into Exact Second Order Differential Equations}
In this section, we solve the following class of  second order nonlinear differential equations:
\begin{equation}\label{part3eq1}
a_2(x,y,y^\prime)\left(f^\prime(y)y^{\prime\prime}+f^{\prime\prime}(y)(y^\prime)^2\right)+a_1(x,y,y^\prime)(f^\prime(y)y^\prime)+a_0(x,y,y^\prime)=0,
\end{equation}
where $f(y)$ is an invertible function and  $f\in C^2(a,b)$. To solve this class of differential equations, we let $z=f(y)$. Then $z^\prime=f^\prime(y)y^\prime$ and $z^{\prime\prime}=f^{\prime\prime}(y)(y^\prime)^2+f^\prime(y)y^{\prime\prime}.$ Moreover, we let $y=f^{-1}(z).$ Then $y^\prime=\frac{z^\prime}{f^{\prime}\left(f^{-1}(z)\right)}.$ Hence, equation \eqref{part3eq1} can be transformed into the following differential equation:
\begin{equation}\label{part3eq2}
a_2\left(x,f^{-1}(z),\frac{z^\prime}{f^\prime\left(f^{-1}(z)\right)}\right)z^{\prime\prime}+a_1\left(x,f^{-1}(z),\frac{z^\prime}{f^\prime\left(f^{-1}(z)\right)}\right)z^\prime+a_0\left(x,f^{-1}(z),\frac{z^\prime}{f^\prime\left(f^{-1}(z)\right)}\right)=0
\end{equation}
Assume that \eqref{part3eq2} is exact, then it can be solved. To explain the procedure of solving such differential equations, we consider the following example:
\begin{example}
Consider the  second order nonlinear differential equation
\begin{equation}\label{example6}
\left\{
\begin{array}{ll}
e^y\left[y^{\prime\prime}+(y^\prime)^2\right]+12xe^{4y}y^\prime+\left(3e^{4y}-1\right)=0,\\
y(0)=\ln2,\;y^\prime(0)=0.
\end{array}
\right.
\end{equation}
Let $z=e^y$. Then $z^\prime=e^{y}y^\prime$ and $z^{\prime\prime}=e^{y}y^{\prime\prime}+e^y(y^\prime)^2.$ Hence, Eq. \eqref{example6} becomes
\begin{equation}\label{ex6exact}
\left\{
\begin{array}{ll}
z^{\prime\prime}+12xz^3z^\prime+\left(3z^4-1\right)=0,\\
z(0)=2,\;z^\prime(0)=0.
\end{array}
\right.
\end{equation}
Therefore, $a_2(x,z,z^\prime)=1,$ $a_1(x,z,z^\prime)=12xz^3,$ and $a_0(x,z,z^\prime)=\left(3z^4-1\right).$ In addition, we have 
\begin{equation}\label{exactcond22}
\frac{\partial a_2}{\partial z}=\frac{\partial a_1}{\partial z^\prime}=0, \;\; \frac{\partial a_2}{\partial x}=\frac{\partial a_0}{\partial z^\prime}=0,\; \text{and}\; \;\frac{\partial a_1}{\partial x}=\frac{\partial a_0}{\partial z}=12z^3,
\end{equation}
Therefore, equation \eqref{ex6exact} is exact differential equation. Hence, its first integral exists and it is given by 
\[
z^\prime+3xz^4-x=0.
\]
For which an implicit solution of this equation can be obtained by separating the variables, and so, $y(x)=\ln(z(x)).$ 
\end{example}
\begin{remark}
Assume that  \eqref{part3eq2} is not exact. Then an integrating factor of \eqref{part3eq2} could be exist. Hence, it can be transformed into an exact differential equation (see \cite{AlAhmad}). 
To explain the procedure of solving \eqref{part3eq2} in case it is not exact, we present the following example: 
\end{remark}
\begin{example} Consider the second order  nonlinear differential equation
\begin{equation}\label{part3ex21}
xe^y\left(2x+e^y\right)\left(y^{\prime\prime}+(y^\prime)^2\right)+x\left(x+e^y\right)y^\prime+\left(3x+e^y\right)=0.
\end{equation}
By multiplying this equation by $e^y$, we get
\begin{equation}\label{part3ex22}
xe^{2y}\left(2x+e^y\right)\left(y^{\prime\prime}+(y^\prime)^2\right)+x\left(x+e^y\right)e^yy^\prime+e^y\left(3x+e^y\right)=0.
\end{equation}
Let $z=e^y$. Then $z^\prime=e^{y}y^\prime$ and $z^{\prime\prime}=e^{y}y^{\prime\prime}+e^y(y^\prime)^2.$ Hence, by substituting $z$, $z^\prime$ and $z^{\prime\prime}$ in \eqref{part3ex22}, we get 
\begin{equation}\label{part3ex23}
xz(2x+z)z^{\prime\prime}+x(x+z)z^\prime+z(3x+z)=0.
\end{equation}
This equation is not exact since $\frac{\partial a_2}{\partial z}=2(x+z)\neq0=\frac{\partial a_1}{\partial z^\prime}$. An integrating factor of this second order nonlinear differential equation exists, and it is given by $\mu(x,z)=\displaystyle \frac{1}{xz(2x+z)}.$ Multiplying \eqref{part3ex23} by $\mu(x,z)$, we get
\begin{equation}\label{part3ex24}
z^{\prime\prime}+\frac{(x+z)}{z(2x+z)}z^\prime+\frac{(3x+z)}{x(2x+z)}=0.
\end{equation}
Clearly,
\begin{equation}
\frac{\partial a_2}{\partial z}=\frac{\partial a_1}{\partial z^\prime}=0, \;\; \frac{\partial a_2}{\partial x}=\frac{\partial a_0}{\partial z^\prime}=0,\; \text{and}\; \;\frac{\partial a_1}{\partial x}=\frac{\partial a_0}{\partial z}=\frac{-1}{(2x+z)^2}\,.
\end{equation}
Therefore, the differential equation \eqref{part3ex24} is exact, and its first integral is given by 
\begin{equation}
c= z^\prime+\ln\left(xz\sqrt{2x+z}\right).
\end{equation}
This first order differential equation can be solved by using the elementary techniques of solving first order differential equations. Hence, $y(x)=\ln(z(x))$.
\end{example}
Finally, we consider the nonhomogeneous second order linear differential equation
\[
a_2(x)y^{\prime\prime}+a_1(x)y^\prime+a_0(x)y=h(x),
\]
where $a_2(x)\neq 0$, $a_1(x)$, and $a_0(x)$ are differentiable functions on an open interval $(a,b) \subset \R$. This equation admits an integrating factor $\mu(x)=\displaystyle\frac{1}{a_2(x)}$ provided that  $W(a_2,a_1)(x)=a_0(x)a_2(x),$ where $W(a_2,a_1)(x)=a_2(x)a^\prime_1(x)-a_1(x)a^\prime_2(x)$. For this case, we present the following example:
\begin{example}
consider the second order linear differential equation
\[
e^x y^{\prime\prime}+\cos x y^\prime -(\cos x+\sin x)y=h(x).  
\]
By multiplying this equation by the integrating factor $e^{-x}$, we get
\[
 y^{\prime\prime}+e^{-x}\cos x y^\prime -e^{-x}(\cos x+\sin x)y=h(x)e^{-x}. 
\]
This equation can be written as
\[
 \frac{d}{dx}\left[y^\prime+(e^{-x}\cos x) y\right]=h(x)e^{-x}  
\]
Hence, its first integral is given by 

\[
 y^\prime+(e^{-x}\cos x) y=\displaystyle\int^x h(\xi)e^{-\xi}d\xi+c_1 
\]
which can be solved by using the elementary techniques of solving first order differential equations.
\end{example}
\section{Concluding Remarks}
In this paper, we  solved some classes of second order differential equation. In fact, we solved the following classes of second order differential equations:
\begin{enumerate}
\item The Chebyshev's type of second order differential equation \begin{equation}\label{CRmain}
p(x)y^{\prime\prime}(x)+\frac{1}{2}p^\prime(x)y^\prime(x)+f(\sqrt{p(x)}\;y^\prime(x),y(x))=0,\;\;\; x\in (a,b),
\end{equation}
where $p(x)$ is a positive and differentiable function on an open interval $ (a,b)\subset \R$, and  $f(\sqrt{p(x)}\;y^\prime(x),y(x))$ is a continuous function on some domain $D\subset \R^2$. 
\item The $f-$type of second order differential equations
\begin{itemize}
\item [a)] \begin{equation}\label{CRpart2eq}
a_2\left(f^\prime(y)y^{\prime\prime}+(y^\prime)^2f^{\prime\prime}(y)\right)+a_1f^\prime(y)y^\prime+a_0f(y)=g(x),
\end{equation}
where $a_2,a_1$ and $a_0$ are constants, and the function $f(y)$ is of  $C^2-$class on some open interval $ (a,b) \subset \R$, and 
\item [b)] 
\begin{equation}\label{CRpart2eeq}
p(x)\left(f^\prime(y)y^{\prime\prime}+(y^\prime)^2f^{\prime\prime}(y)\right)+\frac{1}{2}p^\prime(x)f^\prime(y)y^\prime+a_0f(y)=0,
\end{equation}
where $p(x)$ is a positive and differentiable function on some open interval $(a,b) \subset \R$, and  $f\in C^2(c,d)$,  for some  open interval $(c,d) \subset \R$.
\end{itemize}
\item $f$-type second order differential equations that can be transformed into  exact second order Differential Equations
\begin{equation}
a_2(x,y,y^\prime)\left(f^\prime(y)y^{\prime\prime}+f^{\prime\prime}(y)(y^\prime)^2\right)+a_1(x,y,y^\prime)(f^\prime(y)y^\prime)+a_0(x,y,y^\prime)=0,
\end{equation}
where the function $f(y)$ is an invertible function and  $f\in C^2(a,b)$, for some  open interval $(a,b) \subset \R$. 
\end{enumerate}
Moreover, we presented some examples to explain our approach of solving the above classes of second order differential equation.  

\end{document}